\theoremstyle{definition}
\newtheorem{defn}{Definition}[section]
\newtheorem{rem}[defn]{Remark}
\theoremstyle{plain}
\newtheorem{thm}[defn]{Theorem}
\newtheorem{lem}[defn]{Lemma}
\title[]{A note on stabilization heights of fiber surfaces and the Hopf invariants}
\author{Keiji Tagami}
\subjclass[2020]{57K10, 57K33}
\keywords{fibered knot; stabilization height; Hopf invariant; plane field}
\address{Department of Fisheries Distribution and Management, National Fisheries University, Shimonoseki, Yamaguchi 759-6595 JAPAN}
\email{tagami@fish-u.ac.jp}
\date{\today}
\begin{document}
\begin{abstract}
In this paper, we focus on the Hopf invariant and 
give an alternative proof for the unboundedness of stabilization heights of fiber surfaces, which was firstly proved by Baader and Misev. 
\end{abstract}
\maketitle
\section{Introduction}
A plumbing is one of well-known operations gluing two surfaces (\cite{Stallings}). 
If one of the surfaces is a Hopf band, we call the plumbing a {\it Hopf plumbing}. 
The inverse operation of a Hopf plumbing is called a {\it Hopf deplumbing} (see Figure~\ref{fig:plumbing}). 
It is known that a surface obtained by plumbing (more strongly, Murasugi sum) two surfaces is fibered if and only if the two surfaces are fibered (see \cite{gabai2, gabai1}). In particular, Hopf plumbings preserve the fiberedness of surfaces. 
\begin{figure}[h]
\centering
\includegraphics[scale=0.5]{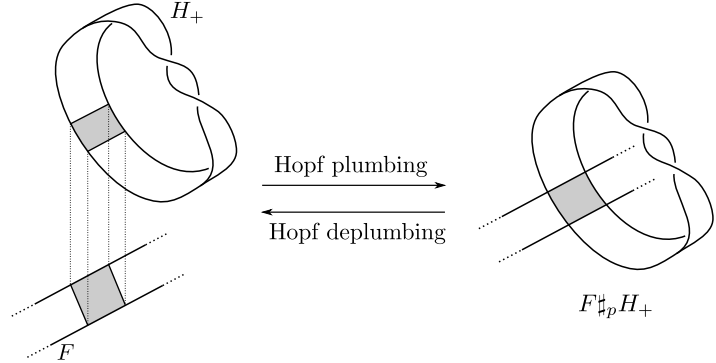}
\caption{Schematic picture of Hopf plumbing and Hopf deplumbing. Here, we only draw the case the Hopf band is the positive Hopf band $H_{+}$. In this picture, $F$ and $H_{+}$ are split. }
\label{fig:plumbing}
\end{figure}
\par
Harer \cite{harer2, Harer1} proved that any fiber surface in $\mathbf{S}^{3}$ is obtained from the standard disk by a sequence of Hopf plumbings, Hopf deplumbings and twistings 
(for definition of twisting, see Section~\ref{sec:stallings}). 
Harer asked whether we can omit twistings. 
Giroux \cite{Giroux1} and Goodman \cite{Goodman1}, independently, gave the affirmative answer, that is, every fiber surface is obtained from the standard disk by a finite sequence of Hopf plumbings, followed by a finite sequence of Hopf deplumbings (see also \cite{Giroux-Goodman}). 
Unfortunately, there is a fiber surface which cannot be constructed from the standard disk by only Hopf plumbings. 
For example, see Melvin and Morton's work \cite{Melvin-Morton}. 
\par 
Baader and Misev \cite{Baader-Misev} studied the minimal number of Hopf deplumbings which are needed to obtain a given fiber surface $\Sigma$ from the standard disk $D$. 
It is called the {\it stabilization height} of $\Sigma$. 
More precisely, the stabilization height of a fiber surface $\Sigma$ is defined as follows. 
Let $S$ be a fiber surface which is obtained from both $\Sigma$ and the standard disk $D$ by a finite sequence of Hopf plumbings (without Hopf deplumbings). 
Such a surface $S$ is called a {\it common stabilization} of $\Sigma$ and $D$. 
Then, the stabilization height $h(\Sigma)$ of a fiber surface $\Sigma$ is defined by 
\[
h(\Sigma)=\min\{b_{1}(S)-b_{1}(\Sigma)\mid S \text{ is a common stabilization of }\Sigma \text{ and }D\}, 
\]
where $b_1$ denotes the first Betti number. 
Baader and Misev proved that the stabilization height is unbounded. 
\begin{thm}[{\cite[Theorem~1.1]{Baader-Misev}}]\label{thm:baader-misev}
Let $\Sigma_{n}\subset \mathbf{S}^3$ be a family of fiber surfaces of the same topological type $\Sigma$ whose monodromies $\varphi_n\colon \Sigma \rightarrow \Sigma$ differ by a power 
of a Dehn twist $t_c$ along an essential simple closed curve $c\subset \Sigma$ $\colon \varphi_n=\varphi_0\circ t^{n}_c$. Then 
\[
\lim_{|n|\rightarrow \infty}h(\Sigma_n)=\infty. 
\]
\end{thm}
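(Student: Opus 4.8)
The plan is to detect the stabilization height by the \emph{Hopf invariant} of the plane field associated with a fiber surface. To a fiber surface $\Sigma\subset\mathbf{S}^{3}$ one attaches the open book it supports, hence a contact structure, hence an oriented $2$-plane field on $\mathbf{S}^{3}$; since homotopy classes of such plane fields are classified by $\pi_{3}(\mathbf{S}^{2})\cong\mathbb{Z}$, this yields an integer-valued invariant $\lambda(\Sigma)$ (the Hopf invariant, equivalently the enhanced Milnor number of Neumann and Rudolph). First I would record three properties of $\lambda$: it is nonnegative; it is \emph{additive under plumbing}, $\lambda(\Sigma_{1}\ast\Sigma_{2})=\lambda(\Sigma_{1})+\lambda(\Sigma_{2})$; and it takes the value $0$ on the positive Hopf band $H_{+}$ and $1$ on the negative Hopf band $H_{-}$ (in particular $\lambda(D)=0$). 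These reflect the contact-geometric fact that a positive Hopf plumbing preserves the supported contact structure, whereas a negative Hopf plumbing alters its homotopy class by exactly one unit.

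The second step converts these properties into a lower bound for $h$. Let $S$ be any common stabilization of $\Sigma_{n}$ and $D$, which exists by Giroux--Goodman \cite{Giroux-Goodman}. Write the passage $\Sigma_{n}\rightsquigarrow S$ as $a$ positive and $b$ negative Hopf plumbings and the passage $D\rightsquigarrow S$ as $a'$ positive and $b'$ negative ones. Additivity gives
\[
\lambda(\Sigma_{n})+b=\lambda(S)=\lambda(D)+b'=b'.
\]
Since each Hopf plumbing raises $b_{1}$ by exactly $1$, we have $a'+b'=b_{1}(S)$, and because $b\ge 0$ we obtain $\lambda(\Sigma_{n})\le b'\le b_{1}(S)$, that is,
\[
b_{1}(S)-b_{1}(\Sigma)\ \ge\ \lambda(\Sigma_{n})-b_{1}(\Sigma).
\]
Minimizing the left-hand side over all common stabilizations $S$ yields
\[
h(\Sigma_{n})\ \ge\ \lambda(\Sigma_{n})-b_{1}(\Sigma).
\]
As $b_{1}(\Sigma)$ is constant throughout the family, it now suffices to prove that $\lambda(\Sigma_{n})\to\infty$.

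For the remaining growth statement I would first reduce to one direction of $n$. The stabilization height is invariant under mirroring, since an orientation-reversing homeomorphism of $\mathbf{S}^{3}$ fixes $D$, preserves $b_{1}$, and exchanges $H_{+}$ with $H_{-}$; moreover mirroring replaces $t_{c}$ by a negative Dehn twist, so the mirror family $\{\overline{\Sigma_{n}}\}$ is again of the type in the theorem but with the sign of the twisting reversed. Hence $h(\Sigma_{n})=h(\overline{\Sigma_{n}})$ reduces the case $n\to+\infty$ to the case $n\to-\infty$, and it is enough to show $\lambda(\Sigma_{n})\to\infty$ as $n\to-\infty$. The key computation is that, because every $\Sigma_{n}$ embeds in $\mathbf{S}^{3}$, the twisting $\varphi_{0}\mapsto\varphi_{0}\circ t_{c}^{\,n}$ is realized as a $(\mp 1/n)$-surgery on the page push-off of $c$ (which forces $c$ to be unknotted in $\mathbf{S}^{3}$), under which the supported contact structure becomes increasingly negatively stabilized and the Hopf invariant grows linearly in $|n|$. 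Making this precise is the main obstacle: I would establish $\lambda(\Sigma_{n})\sim |n|$ either through a surgery formula for the Hopf invariant (equivalently Gompf's $d_{3}$) along the unknotted curve $c$, or through a direct formula expressing $\lambda$ via the homological monodromy and computing its change under $t_{c}^{\,n}$. The crux in both routes is the \emph{nonvanishing} of the growth rate; here the essentiality of $c$ together with the $\mathbf{S}^{3}$-embedding constraint must be used to rule out a degenerate contribution, after which $\lim_{|n|\to\infty}h(\Sigma_{n})=\infty$ follows from the displayed inequality.
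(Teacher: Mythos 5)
Your second step---the lower bound on $h$ via additivity of the Hopf invariant under plumbing---is sound; it is essentially Lemmas~\ref{lem:2} and \ref{lem:4} of the paper (in the paper's convention $\lambda(\partial\Sigma)=-H(\xi_\Sigma)$, and note your claim that $\lambda$ is nonnegative is false in general, e.g.\ $\lambda=-(n^2+n-2)$ for the family $F_n$ in Section~\ref{sec:proof}, though you never actually use nonnegativity). The fatal problem is the step you yourself flag as ``the main obstacle'': the claim that $\lambda(\Sigma_n)\to\infty$ under the hypothesis $\varphi_n=\varphi_0\circ t_c^n$. This is not merely unproven---it is false. A Stallings twist of type $(0,1)$ changes the monodromy by exactly a power of a Dehn twist along an \emph{essential} simple closed curve, yet by Giroux's work it preserves the supported contact structure up to isotopy, hence preserves $H(\xi_{\Sigma_n})$ identically (see Section~\ref{sec:stallings} and Remark~\ref{rem:cl}). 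Baader and Misev's motivating example in \cite[Section~2.2]{Baader-Misev} is of precisely this type, and the paper states explicitly in Section~\ref{sec:discussion} that the Hopf-invariant method cannot be applied to it. So your hope that ``the essentiality of $c$ together with the $\mathbf{S}^3$-embedding constraint'' rules out a degenerate contribution is refuted: essential curves can be twisting loops of type $(0,1)$, in which case $\lambda(\Sigma_n)$ is constant and your displayed inequality $h(\Sigma_n)\geq \lambda(\Sigma_n)-b_1(\Sigma)$ yields only a bounded lower bound.

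The actual proof of Theorem~\ref{thm:baader-misev} goes through stable commutator length, which the Hopf invariant cannot replace: Baader and Misev show (a) $\operatorname{scl}(\varphi_n)\to\infty$ in the mapping class group of a suitable capped-off common stabilization, and (b) $\operatorname{scl}(\varphi_n)<C(b_1(S_n))$ for a constant depending only on $b_1(S_n)$, whence $b_1(S_n)\to\infty$. The relationship between the two methods is exactly the point of this paper: the Hopf-invariant argument (Lemmas~\ref{lem:main} and \ref{lem:main2}) proves unboundedness of $h$ for families where $\operatorname{scl}$ stays bounded---which by (a) cannot satisfy the hypothesis of Theorem~\ref{thm:baader-misev}---while families satisfying that hypothesis may have constant Hopf invariant. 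The two techniques cover disjoint regimes, and your proposal applies the wrong one. A minor additional caution: even when the Hopf invariant does change under twisting, your guessed linear growth $\lambda(\Sigma_n)\sim|n|$ is not the generic behavior; in the paper's example the growth is quadratic, $H(\xi_{F_n})=n^2+n-2$.
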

Roughly speaking, to prove Theorem~\ref{thm:baader-misev}, Baader and Misev showed the following (a) and (b).    
\begin{itemize}
\item[(a)] $\displaystyle{\lim_{|n|\rightarrow \infty}}\operatorname{scl}(\varphi_n)=\infty$, where $\operatorname{scl}(\varphi_n)$ is the stable commutator length of $\varphi_n$ in the mapping class group of a closed surface obtained from a common stabilization $S_n$ of $\Sigma_{n}$ and $D$ 
by capping off $S_n$ by a disk after plumbing at most six Hopf bands so that $g(S_{n})\geq 3$, 
\item[(b)] $\operatorname{scl}(\varphi_n)<C(b_{1}(S_n))$, where $C(b_{1}(S_n))$ is a constant depending only on $b_{1}(S_n)$. 
From (a), we obtain $C(b_{1}(S_n))\rightarrow \infty$ as $|n|\rightarrow \infty$ and this implies $\displaystyle{\lim_{|n|\rightarrow \infty}}b_{1}(S_n)=\infty$. 
\end{itemize}
\par 
In this paper, we give a different approach to the unboundedness of stabilization heights. 
In particular, we prove the following. 
\begin{thm}\label{thm:main}
There are a family $\{\Sigma_{n}\}_{n\in\mathbf{Z}}$ of fiber surfaces in $\mathbf{S}^3$ of the same topological type $\Sigma$ and a constant $C\in\mathbf{Z}_{>0}$ such that 
\begin{itemize}
\item $\displaystyle{\lim_{|n|\rightarrow \infty}}h(\Sigma_n)=\infty$, 
\item $\operatorname{scl}(\psi_n) <C $ for any $n\in \mathbf{Z}$, where $\psi_{n}$ is the monodromy of $\Sigma_{n}$ and we consider the stable commutator length $\operatorname{scl}(\psi_n)$ in the mapping class group of any surface containing $\Sigma$ as a subsurface. 
\end{itemize}
\end{thm}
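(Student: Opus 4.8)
The plan is to replace the stable-commutator-length machinery of Baader and Misev by a single integer-valued homotopy invariant of the plane field carried by the open book, namely the Hopf invariant $\lambda(\Sigma)$ of the oriented $2$-plane field supported by the open book with page $\Sigma$ (equivalently, the enhanced Milnor number of the fibered link $\partial\Sigma$ in the sense of Neumann--Rudolph, or the $d_3$-invariant of the supported contact structure). First I would record the two structural properties of $\lambda$ that drive everything: it is additive under Murasugi sum, hence under Hopf plumbing, and on the Hopf bands it takes the values $\lambda(H_+)=0$ and $\lambda(H_-)=1$, while $\lambda(D)=0$ for the standard disk. These I would take from the established theory of the enhanced Milnor number.

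Second, I would extract a lower bound for the stabilization height purely from additivity. Let $S$ be any common stabilization of $\Sigma_n$ and $D$. Writing $S$ as the result of plumbing $p$ positive and $q$ negative Hopf bands onto $\Sigma_n$, and simultaneously $p'$ positive and $q'$ negative Hopf bands onto $D$, additivity gives
\[
\lambda(S)=\lambda(\Sigma_n)+q=\lambda(D)+q'=q'.
\]
Since $q,q'\ge 0$ we get $b_1(S)=p'+q'\ge q'=\lambda(\Sigma_n)+q\ge \lambda(\Sigma_n)$, and since $b_1(\Sigma_n)=b_1(\Sigma)$ is constant along the family, every common stabilization satisfies $b_1(S)\ge\lambda(\Sigma_n)$, whence
\[
h(\Sigma_n)\ \ge\ \lambda(\Sigma_n)-b_1(\Sigma).
\]
Thus it suffices to produce a family of the same topological type with $\lambda(\Sigma_n)\to\infty$ and uniformly bounded scl.

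Third, the construction. I would start from a fixed fiber surface $\Sigma_0\subset\mathbf S^3$ with monodromy $\psi_0$ and choose two Stallings-twist curves $c,c'$ on it related by $t_{c'}=f\,t_c\,f^{-1}$ for some $f\in \mathrm{MCG}(\Sigma)$ (for instance $c'=f(c)$). Performing $+n$ twists along $c$ and $-n$ twists along $c'$ keeps the ambient manifold $\mathbf S^3$ and the topological type $\Sigma$ fixed, since both are Stallings twists, and multiplies the monodromy by
\[
t_c^{\,n}\,t_{c'}^{-n}=t_c^{\,n}\,f\,t_c^{-n}\,f^{-1}=[\,t_c^{\,n},\,f\,],
\]
a single commutator. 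Hence $\psi_n=\psi_0\,[t_c^{\,n},f]$, and because a single commutator is carried to a single commutator by the homomorphism induced by any inclusion $\Sigma\hookrightarrow S'$, the quasimorphism characterization of scl together with its monotonicity under homomorphisms yields $\operatorname{scl}(\psi_n)\le \operatorname{scl}_{\mathrm{MCG}(\Sigma)}(\psi_0)+1=:C$ in $\mathrm{MCG}(S')$ for every surface $S'$ containing $\Sigma$. The gain in the Hopf invariant comes from the asymmetry of the two framed curves in $\mathbf S^3$: a $+1$ twist along $c$ and a $+1$ twist along $c'$ change $\lambda$ by different integers $a\ne b$, so $\lambda(\Sigma_n)=\lambda(\Sigma_0)+(a-b)n$, which I arrange to tend to $+\infty$ by the choice of twisting direction.

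Combining the two halves gives $h(\Sigma_n)\ge\lambda(\Sigma_n)-b_1(\Sigma)\to\infty$ while $\operatorname{scl}(\psi_n)<C$ uniformly. The main obstacle I anticipate lies in the third step: verifying that the balanced twisting along $c$ and $c'$ really changes the Hopf invariant by a nonzero multiple of $n$. This is exactly where the new approach has teeth and the scl approach cannot: since $\lambda$ is unbounded on the commutators $[t_c^{\,n},f]$, it is \emph{not} a homogeneous quasimorphism of the monodromy, so it detects the twisting although scl is blind to it. Carrying this out requires a usable formula for $\lambda$ (via the Meyer signature cocycle of the Dehn-twist factorization, or a direct plane-field computation of $d_3$) and a verification that $c$ and $c'$ can be chosen with $a\ne b$; establishing this asymmetry is the crux of the argument.
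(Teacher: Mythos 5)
Your first two steps and your scl argument coincide with the paper's own skeleton: the additivity of the enhanced Milnor number under Hopf plumbing gives exactly the height bound of Lemmas~\ref{lem:2} and \ref{lem:4} (your $h(\Sigma_n)\ge\lambda(\Sigma_n)-b_1(\Sigma)$ is the same estimate, up to the sign convention $\lambda=-H$ and the mirror trick of Lemma~\ref{lem:3}), and your commutator identity $t_c^{\,n}t_{c'}^{-n}=[t_c^{\,n},f]$ is precisely the proof of Lemma~\ref{lem:main2} --- note that your hypothesis $c'=f(c)$ costs nothing, since for any two non-separating curves the change-of-coordinates principle supplies such an $f$. The genuine gap is the step you yourself flag as the crux: you never exhibit curves for which the balanced twisting makes the Hopf invariant unbounded, and your quantitative model for how it would is wrong in an instructive way. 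You posit history-independent per-twist constants $a\neq b$ and linear growth $\lambda(\Sigma_n)=\lambda(\Sigma_0)+(a-b)n$; but in the example the paper actually uses, the invariant grows \emph{quadratically}, $H(\xi_{F_n})=n^2+n-2$, because the two curves interact (the $(1/n,-1/n)$-surgery along the link $c'_1\cup c'_2$ produces cross terms in the $d_3$ computation), so per-twist increments are not constant. Moreover the constraints are in tension: a Stallings twist of type $(0,1)$ preserves the Hopf invariant (Section~\ref{sec:discussion}), so the twisting disks must meet the surface in at least two points, and one must separately verify that the combined surgery returns $\mathbf{S}^3$ --- in the paper's construction the curves are not individually assumed to give $\mathbf{S}^3$-preserving twists; rather $M_{c'_1\cup c'_2}(1/n,-1/n)\cong\mathbf{S}^3$ is established via the annulus-presentation machinery.

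The paper closes this gap not by a general asymmetry argument but by an explicit example: the genus-$2$ fiber surface $F_0$ of $6_3$ with curves $c'_1,c'_2$ coming from an annulus presentation, for which $d_3(\xi_{F_n})=-n^2-n+3/2$ is computed in \cite[Section~6]{Abe-Tagami3}, hence $H(\xi_{F_n})\to\infty$. It then takes a boundary sum with a fixed positive-genus fiber surface so that the genus exceeds $2$ and the relevant mapping class groups are perfect, making $\operatorname{cl}(\psi_0)$ finite --- a point your proposal leaves unaddressed, since your constant $C$ is only useful if $\psi_0$ is a product of commutators in every ambient mapping class group. As it stands, your argument proves the conditional statement (essentially Lemma~\ref{lem:main2}) but not Theorem~\ref{thm:main} itself; to complete it you would need either an explicit $d_3$/Meyer-cocycle computation for a concrete pair of curves, as the paper does, or a general existence theorem for twisting loops with the required asymmetry, which is nontrivial and not supplied.
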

We prove Theorem~\ref{thm:main} in Section~\ref{sec:proof}. 
To prove the unboundedness of stabilization heights, we use the plane fields on $\mathbf{S}^3$ obtained from $\Sigma_{n}$ by Thurston and Winkelnkemper's consruction \cite{Thurston-Winkelnkemper} and homotopy invariants for the plane fields. 
In particular, we use the Hopf invariant.  
\begin{rem}
The family $\{\Sigma_{n}\}_{n\in\mathbf{Z}}$ in Theorem~\ref{thm:main} does not satisfy the condition of Theorem~\ref{thm:baader-misev} because of the boundedness of the stable commutator length and $(a)$. In particular, the technique to prove Theorem~\ref{thm:baader-misev} cannot be used to prove Theorem~\ref{thm:main}. 
Conversely, there is some example satisfying the condition of Theorem~\ref{thm:baader-misev}  where we cannot use the technique to prove Theorem~\ref{thm:main} (for more detail, see Section~\ref{sec:discussion}). 
\end{rem}
\par
This paper is organized as follows. 
In Section~\ref{sec:stallings}, we recall the definition of Stallings twists. 
In Section~\ref{sec:Hopf}, we recall properties of the Hopf invariant and stabilization heights. 
In Section~\ref{sec:proof}, we prove Theorem~\ref{thm:main}. 
In Section~\ref{sec:discussion}, we compare Baader and Misev's work with ours. 
\section{Stallings twist}\label{sec:stallings}
\subsection{Open book decompositions}\
Let $\Sigma$ be an oriented surface with boundary and $f\colon \Sigma\rightarrow \Sigma$ be a diffeomorphism on $\Sigma$ fixing the boundary. 
Consider the pinched mapping torus 
\[\widehat{M}_{f}=\Sigma\times [0,1]/_{\sim},\]
where the equivalent relation $\sim$ is defined as follows:
\begin{itemize}
\item  $(x,1)\sim (f(x),0)$ for $x \in \Sigma$, and 
\item  $(x,t)\sim (x,t')$ for $x \in \partial \Sigma$ and $t$, $t'$ $\in [0,1]$. 
\end{itemize}
Here, we orient $[0,1]$ from $0$ to $1$ and we give an orientation of $\widehat{M}_{f}$ by the orientations of $\Sigma$ and $[0,1]$.
Let $M$ be a closed  oriented $3$-manifold.
If $M$ is orientation-preserving  diffeomorphic to $\widehat{M}_{f}$, 
the pair $(\Sigma, f)$ is called an \textit{open book decomposition} of $M$. 
The map $f$ is called the \textit{monodromy} of $(\Sigma, f)$. 
The boundary $L=\partial \Sigma$ of $\Sigma$ is called a \textit{fibered link} in $M$, and $\Sigma\subset M$ is called a \textit{fiber surface} of $L$. 
The diffeomorphism $f$ is also called the \textit{monodromy of $L$}. 
\subsection{Twistings and Stallings twists}\
Let $M$ be a closed oriented $3$-manifold, and $(\Sigma, f)$ be an open book decomposition of $M$.
Let $c$ be a simple closed curve on  a fiber surface  $\Sigma \subset M$.
Then,  {\it a twisting along $c$ of order $n$} is defined as performing $(1/n)$-surgery along $c$ with respect to the framing determined by $\Sigma$. 
Then we obtain the following.
\begin{lem}[Stallings \cite{Stallings}]\label{lem:Stallings}
Let $M$ be a closed oriented $3$-manifold and $(\Sigma, f)$ be an open book decomposition of $M$.
Then, the resulting manifold obtained from $M$ by a twisting along $c$ of order $n$
is orientation-preservingly diffeomorphic to $\widehat{M}_{t_{c}^{-n}\circ f}$.
\end{lem}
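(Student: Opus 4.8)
The plan is to localize the whole operation in a solid-torus neighborhood of $c$ and then match the surgery description against the mapping-torus description of a Dehn twist.

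First I would fix a local model. Since $c$ is embedded in the interior of the page $\Sigma_0=\Sigma\times\{0\}$, choose an annular neighborhood $A=S^1_\theta\times[-1,1]_x$ of $c$ in $\Sigma$ and thicken it in the direction transverse to the pages to obtain a tubular neighborhood $N=N(c)\cong S^1_\theta\times D^2_{(x,s)}$, where $s$ is the flow coordinate. In these coordinates the open book of $M=\widehat{M}_{f}$ restricts to $N$ as the trivial fibration whose pages are the horizontal annuli $A\times\{s\}$. On the boundary torus $\partial N$ I would record the meridian $\mu=\{\theta_0\}\times\partial D^2$ (bounding a transverse disk in $N$) and the surface-framing longitude $\lambda=S^1_\theta\times\{x=1,\,s=0\}$ (the pushoff of $c$ inside $\Sigma_0$), and note that the pages meet $\partial N$ in curves parallel to $\lambda$.

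Next I would set up the two operations to be compared. On one side, a twisting of order $n$ removes $\operatorname{int}N$ and reglues a solid torus $V$ by the map sending its meridian $\mu_V$ to $\mu+n\lambda$ and $\lambda_V$ to $\lambda$; this is exactly $(1/n)$-surgery with respect to the framing $\lambda$. On the other side, changing the monodromy from $f$ to $t_{c}^{-n}\circ f$ is realized inside the mapping torus by cutting $\widehat{M}_{f}$ along the annular piece $A\subset\Sigma_0$ of the page and regluing the two copies of $A$ after inserting the $n$-fold Dehn twist $t_{c}^{-n}$. Since $t_c$ is supported in $A$, this modification is supported in $N$ and leaves $M\setminus\operatorname{int}N$, together with its fibration, untouched. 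The goal is then to show that these two supported modifications yield orientation-preservingly diffeomorphic manifolds.

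The crux is the boundary bookkeeping on $\partial N\cong T^2$. I would construct the refibered solid torus explicitly as a \emph{twist region} $A\times[-\epsilon,\epsilon]_s$ in which the first-return map of the flow accumulates exactly $t_{c}^{-n}$ as $s$ runs across $N$, with the fibration agreeing with the old one along $\partial N$. Writing the new capping (meridian) curve of this region in the basis $(\mu,\lambda)$, the $n$ twists shear the transverse-disk direction $\mu$ by $n$ copies of $\lambda$, so the new meridian is $\mu\pm n\lambda$; matching this against the surgery gluing identifies the two manifolds and exhibits an open book with page $\Sigma$ and monodromy $t_{c}^{-n}\circ f$. The main obstacle, and the step requiring genuine care, is precisely this identification together with the orientation and sign conventions: one must verify that the orientation of $\widehat{M}_{t_{c}^{-n}\circ f}$ induced by $\Sigma$ and $[0,1]$ agrees with the orientation of the surgered manifold, thereby pinning down the sign as $t_{c}^{-n}$ (rather than $t_{c}^{+n}$) and confirming that the diffeomorphism is orientation-preserving.
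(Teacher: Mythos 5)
The paper itself gives no proof of this lemma --- it is quoted from Stallings as a known result --- so there is nothing internal to compare your argument against; your proposal has to stand on its own. Its overall structure does: localizing both operations in a solid torus $N\cong A\times[-\epsilon,\epsilon]$ around $c$, observing that the $(1/n)$-surgery and the replacement of $f$ by $t_{c}^{-n}\circ f$ are both modifications supported in $N$ that fix $M\setminus \operatorname{Int}(N)$ together with its fibration, and then comparing the two regluings through the $(\mu,\lambda)$-basis on $\partial N$ is the standard and correct route.

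The genuine gap is that your argument stops exactly where the statement has its content. Your key computation ends with ``the new meridian is $\mu\pm n\lambda$,'' and the determination of the sign --- which you correctly identify as the step requiring care --- is described but never carried out. The lemma does not merely assert that the twisting inserts \emph{some} power of $t_c$ into the monodromy; it asserts the specific sign, namely that $(1/n)$-surgery with respect to the page framing produces $t_{c}^{-n}\circ f$ and not $t_{c}^{n}\circ f$. Since $\widehat{M}_{t_{c}^{-n}\circ f}$ and $\widehat{M}_{t_{c}^{n}\circ f}$ are in general not diffeomorphic, a proof that leaves the exponent ambiguous up to sign establishes the statement only up to replacing it by its (generally false) mirror variant. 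To close this you must fix every convention --- $t_c$ is the right-handed twist, surgery slope $p/q$ means the new meridian maps to $p\mu+q\lambda$, and $\widehat{M}_{f}$ is oriented by $\Sigma$ followed by $[0,1]$ --- and then compute the image on $\partial N$ of the curve bounding a meridian disk of the reglued twist region, checking at the same time that the identification is orientation-preserving; with these conventions the positive slope $+1/n$ forces the inserted twist to be left-handed, giving $t_{c}^{-n}$. Note also that you cannot discharge this sign check by citing Stallings, because the paper explicitly remarks that its definition of twisting differs slightly from Stallings' original one, so the sign must be verified against the present conventions rather than imported.
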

Lemma~\ref{lem:Stallings} implies that, by a  twisting along $c$ of order $n$,
the fibered link with monodromy $f$ 
is changed into the fibered link with monodromy $t_{c}^{-n}\circ f$. 
\par
Consider the case $M=\mathbf{S}^{3}$. 
Let $(\Sigma, f)$ be an open book decomposition of $\mathbf{S}^{3}$. 
Let $c\subset \Sigma$ be an essential simple closed curve on $\Sigma$. 
Suppose that $c$ is the unknot in $\mathbf{S}^{3}$ and the framing of $c$ determined by $\Sigma$ is equal to the framing of $c$ determined by a disk bounded by $c$. 
We call such a curve $c$ a {\it twisting loop}. 
Then, the twisting along a twisting loop $c$ of order $n$ is equivalent to performing $(1/n)$-surgery along the unknot, where $n\in\mathbf{Z}$. 
In this paper, we call this twisting the {\it Stallings twist} along $c$ of order $n$ 
(for a pictorial description of a Stallings twist, see \cite[Figure~1]{Yamamoto2}). 
Remark that this definition is slightly different from the original definition given by Stallings \cite{Stallings}. 
It is easy to see that a Stallings twist preserves the topology of $\mathbf{S}^3$. 
\par
Let $\Sigma\subset \mathbf{S}^{3}$ be a fiber surface.  
Let $c\subset \Sigma$ be a twisting loop and $D$ be a disk bounded by $c$. 
Suppose that the minimal number of components of the intersection between $\operatorname{Int}(D)$ and $\Sigma$ is equal to $m$. 
Then, the Stallings twist along $c$ is called {\it type $(0,m)$} 
\footnote{The first ``$0$" of $(0,m)$ represents that the surface framing of $c\subset \Sigma$ is equal to the ``$0$"-framing. }  (see \cite{Yamamoto1}). 
\par
By Giroux and Goodman's work, Stallings twists can be realized by some Hopf plumbings and deplumbings. 
For example, Yamamoto \cite{Yamamoto1} proved the following. 
\begin{thm}[{\cite[Theorem~1.1]{Yamamoto1}}]\label{thm:yamamoto}
A Stallings twist of type $(0,1)$ can be realized by one Hopf plumbing and one Hopf deplumbing. 
Moreover, the Hopf bands have the same sign, that is, both of them are positive Hopf bands or negative Hopf bands. 
\end{thm}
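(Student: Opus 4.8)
The plan is to reduce the statement to a purely local model and then exhibit the Hopf plumbing and deplumbing by an explicit surface isotopy. First I would exploit the type $(0,1)$ hypothesis to localize everything. Choosing a disk $D$ bounded by the twisting loop $c$ that attains the minimal intersection $m=1$, the fiber surface $\Sigma$ meets $\operatorname{Int}(D)$ in a single component, so inside a ball neighborhood $B$ of $D$ the configuration is standard: an unknotted round circle $c$ bounding the flat disk $D$, with exactly one sheet of $\Sigma$ threading transversally through $D$ and the rest of $\Sigma$ lying outside $B$. Since a Stallings twist is $(1/n)$-surgery supported in a solid-torus neighborhood of $c$, its whole effect is confined to $B$, where for order $\pm 1$ it inserts a single full twist into the one sheet passing through $D$ (Lemma~\ref{lem:Stallings} records the corresponding change of monodromy). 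This reduces the theorem to a local assertion about that one sheet.

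Inside $B$ I would then build the common stabilization. The idea is to plumb one Hopf band $H_{1}$ onto $\Sigma$ along a short arc on the threading sheet, producing a fiber surface $S$ with $b_{1}(S)=b_{1}(\Sigma)+1$; this $S$ is again fibered by Gabai's plumbing theorem. The crucial point is that on $S$ one can locate a second Hopf band $H_{2}$, whose core is disjoint from that of $H_{1}$ and also supported in $B$, such that deplumbing $H_{2}$ returns exactly the Stallings-twisted surface $\Sigma'$ with its correct embedding in $\mathbf{S}^{3}$. I would verify this through an explicit \emph{movie}: an isotopy of $S$ that drags the newly created band across the twist region until the band to be removed sits in standard plumbed position. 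Thus $S$ is a common stabilization obtained from each of $\Sigma$ and $\Sigma'$ by a single plumbing, which is precisely the statement that the Stallings twist is realized by one Hopf plumbing followed by one Hopf deplumbing.

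Finally I would pin down the signs, where the leading ``$0$'' of type $(0,1)$ is essential: because the surface framing of $c$ equals the disk framing, the two deplumbing curves of $H_{1}$ and $H_{2}$ on $S$ sit in configurations that differ only by the inserted full twist, with no framing defect to absorb, and this forces both bands to carry the same handedness, determined by the direction of the twist. I expect the genuine obstacle to be the crux of the middle step: producing the explicit isotopy and checking that deplumbing $H_{2}$ yields the twisted surface with its correct position in $\mathbf{S}^{3}$, rather than merely a surface with the right abstract monodromy $t_{c}^{\mp 1}\circ f$. The sign bookkeeping of the last step is a secondary but real point, since it compares framings rather than homeomorphism types.
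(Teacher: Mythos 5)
The paper never proves Theorem~\ref{thm:yamamoto}: it is imported from Yamamoto \cite[Theorem~1.1]{Yamamoto1}, and Remark~\ref{rem:Yamamoto} notes that even the sign refinement is obtained by inspecting Yamamoto's proof rather than by an argument given here. So your proposal must be measured against Yamamoto's pictorial proof, and your outline does follow its strategy: localize to a ball around the twisting disk using $m=1$, plumb one Hopf band, then exhibit a second band whose deplumbing yields the twisted surface. The genuine gap is the one you flag yourself, and it is not a secondary verification but the entire content of the theorem. Nothing in your sketch determines the choice of $H_{1}$: ``plumb along a short arc on the threading sheet'' is underdetermined, and for a generic arc, band, and framing there is no reason that the stabilized surface $S$ should contain \emph{any} deplumbable Hopf band whose complementary piece is isotopic in $\mathbf{S}^{3}$ to the twisted surface $\Sigma'$. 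The plumbing must be positioned relative to the twist region in exactly the right way, and the promised ``movie'' is precisely the sequence of explicit figures that constitutes Yamamoto's proof. What your sketch actually verifies --- the surgery is supported near $D$, the monodromy changes as in Lemma~\ref{lem:Stallings}, $S$ is fibered by Gabai --- is the routine part; the theorem is reduced to its crux and the crux is deferred.

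The sign step is also not yet an argument. ``No framing defect to absorb'' does not force equal handedness: the sign of a Hopf band is a property of the band itself, not of a framing comparison between the two deplumbing regions, and the leading ``$0$'' in $(0,1)$ is already consumed in making the twist a well-defined Stallings twist. If you want a sign proof independent of the explicit pictures, the invariants of Section~\ref{sec:Hopf} supply one, provided you avoid circularity: if one first checks, e.g.\ via Neumann and Rudolph's twist formula \cite[Theorem~7.1]{Neumann-Rudolph}, that a type $(0,1)$ twist preserves $H(\xi_{\Sigma})$, then writing $S$ as a plumbing of $\Sigma$ with a band of sign $\varepsilon_{1}$ and of $\Sigma'$ with a band of sign $\varepsilon_{2}$, equations $(\ref{p1})$--$(\ref{p3})$ give $H(\xi_{S})=H(\xi_{\Sigma})+h(\varepsilon_{1})=H(\xi_{\Sigma'})+h(\varepsilon_{2})$ with $h(+)=0$ and $h(-)=-1$, forcing $\varepsilon_{1}=\varepsilon_{2}$. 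Note that you cannot instead quote Remark~\ref{rem:cl} or the assertion in Section~\ref{sec:discussion} that $(0,1)$ twists preserve the Hopf invariant, since in this paper both are deduced \emph{from} Theorem~\ref{thm:yamamoto}.
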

\begin{rem}\label{rem:Yamamoto}
The original statement of \cite[Theorem~1.1]{Yamamoto1} does not contain the last part of Theorem~\ref{thm:yamamoto}. 
However, by the same proof of \cite[Theorem~1.1]{Yamamoto1}, it follows. 
\end{rem}
\begin{rem}\label{rem:cl}
By Theorem~\ref{thm:yamamoto} and Girox's work \cite{Giroux1}, we see that a Stallings twist of type $(0,1)$ preserves the supported contact structure. 
However, it changes the knot type of a fibered knot. 
In fact, a single Stallings twist always changes knot types. 
This fact is proved as follows. 
If two fibered knots $K$ and $K'$ with monodromies $f$ and $f'$ are related by a single Stallings twist along $c$, we obtain $t_{c}\circ f=f'$. 
For contradiction, assume that $K=K'$. 
Then, $f$ and $f'$ are conjugate, that is, there is a diffeomorphism $\phi$ on the fiber surface such that $t_{c}\circ f=\phi^{-1} \circ f \circ \phi$. 
This implies $t_{c}$ is a single commutator. 
However, Korkmaz and Ozbagci \cite[Corollary~2.6 and Remarks~(2)]{Korkmaz-Ozbagci} proved that any single Dehn twist cannot be written as a single commutator. 
This is a contradiction. 
This fact is also given by Larson and Meier \cite[Lemma~4.1]{Larson-Meier}. 
\end{rem}
\section{Invariants of plane fields and stabilization heights}\label{sec:Hopf}
Let $(\Sigma, f)$ be an open book decomposition of $\mathbf{S}^{3}$. 
By $\xi_{\Sigma}$, denote the contact structure obtained from $\Sigma$ by Thurston and Winkelnkemper's construction \cite{Thurston-Winkelnkemper} (see also \cite{Giroux1}). 
We call this contact structure the {\it supported contact structure} by $(\Sigma,f)$. 
\par
Fix a parametrization $T\mathbf{S}^3\cong \mathbf{S}^3\times \mathbf{R}^3$. 
For an oriented plane field $\xi$ on $\mathbf{S}^3$, by $v_{\xi}$, we denote a nowhere-zero vector field on $\mathbf{S}^3$ which transversely intersects $\xi$ and is oriented positively with respect to $\xi$. 
Then, $v_{\xi}$ induces a map $\mathbf{S}^3\rightarrow \mathbf{R}^3\setminus \{0\}$. 
Denote the homotopy class of the map by $\mathfrak{a}(\xi)\in \pi_{3}(\mathbf{R}^3\setminus \{0\})\cong \mathbf{Z}$. 
This $\mathfrak{a}$ depends on the parametrization of $T\mathbf{S}^3$. 
It is known that there is a parametrization of $T\mathbf{S}^3$ such that $\mathfrak{a}$ satisfies 
\begin{align*}
&\mathfrak{a}(\xi_{H_{+}})=0, \\
&\mathfrak{a}(\xi_{H_{-}})=-1,
\end{align*}
where $H_{+}$ (resp. $H_{-}$) is the positive (resp. negative) Hopf band. 
In this paper, we call this $\mathfrak{a}$ the {\it Hopf invariant} for plane fields on $\mathbf{S}^3$ and denote it by $H$. 
Note that the classical Hopf invariant is given by $H+1$. 
\par
For an oriented plane field on a $3$-manifold $M$, Gompf \cite{Gompf1} defined a homotopy invariant $d_{3}$. 
In the case $M=\mathbf{S}^{3}$, $d_3$ is essentially equivalent to $H$ 
\footnote{
We comment that Rudolph's invariant $\lambda$ \cite{Rudolph5} of fibered links in $\mathbf{S}^{n}$, called {\it the enhancement to the Milnor number}, is also equivalent to the Hopf invariant in the case $n=3$ (see also \cite{Neumann-Rudolph}). 
In particular, $\lambda(\partial \Sigma)=-H(\xi_{\Sigma})$. 
Hedden \cite[Remark~2.7]{Hedden3} also gave a comment related to this fact. 
}. 
In fact, these invariants satisfy the following properties. 
\begin{align}
&H(\xi_{\Sigma})=-d_{3}(\xi _{\Sigma})-\displaystyle{\frac{1}{2}} \in \mathbf{Z}, \\
&H(\xi_{H_{+}})=0, \label{p1}\\
&H(\xi_{H_{-}})=-1,\\ 
&H(\xi_{\Sigma_{1}\sharp_{p}\Sigma_{2}})=H(\xi_{\Sigma_{1}})+H(\xi_{\Sigma_{2}}), \label{p3} 
\end{align}
where $\Sigma$, $\Sigma_1$ and $\Sigma_2$ are fiber surfaces in $\mathbf{S}^3$, and  $\Sigma_{1}\sharp_{p}\Sigma_{2}$ is a plumbing of $\Sigma_{1}$ and $\Sigma_{2}$ (the same relation holds for Murasugi sum). 
The equations $(\ref{p1})$--$(\ref{p3})$ can be also found in \cite{Hedden2}. 
These invariants are useful in order to estimate stabilization heights. 
\par
Let $\Sigma\subset \mathbf{S}^{3}$ be a fiber surface and $S$ be a common stabilization of $\Sigma$ and the standard disk $D$. 
Define $\alpha_{+}(S)$, $\alpha_{-}(S)$, $\beta_{+}(S)$ and $\beta_{-}(S)$ as follows: 
\begin{itemize}
\item $\alpha_{+}(S)$ (resp. $\alpha_{-}(S)$) is the number of positive (resp. negative) Hopf plumbings needed to obtain $S$ from $\Sigma$, 
\item $\beta_{+}(S)$ (resp. $\beta_{-}(S)$) is the number of positive (resp. negative) Hopf plumbings needed to obtain $S$ from $D$, 
\end{itemize}
Note that $b_{1}(S)-b_{1}(\Sigma)=\alpha_{+}(S)+\alpha_{-}(S)$. 
By computing the Euler numbers, we obtain the following. 
\begin{lem}\label{lem:1}
We have 
\[
\chi (\Sigma)-\alpha_{+}(S)-\alpha_{-}(S)=1-\beta_{+}(S)-\beta_{-}(S)=\chi (S). 
\]
\end{lem}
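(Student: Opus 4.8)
The plan is to track how the Euler characteristic changes under a single Hopf plumbing and then apply this to the two plumbing sequences relating the common stabilization $S$ to $\Sigma$ and to the standard disk $D$.

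First I would record the Euler characteristic formula for a plumbing. Since a plumbing (indeed a Murasugi sum) glues the two pieces along a square, i.e.\ an embedded $2$-disk $\delta$ with $\chi(\delta)=1$, inclusion--exclusion gives
\[
\chi(\Sigma_{1}\sharp_{p}\Sigma_{2})=\chi(\Sigma_{1})+\chi(\Sigma_{2})-1.
\]
Because a Hopf band (positive or negative) is an annulus, its Euler characteristic is $0$, so plumbing a single Hopf band $H_{\pm}$ onto any surface $T$ yields
\[
\chi(T\sharp_{p}H_{\pm})=\chi(T)+0-1=\chi(T)-1.
\]
Thus each Hopf plumbing decreases the Euler characteristic by exactly $1$, independently of the sign of the band.

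Next I would apply this to the two sequences of plumbings defining a common stabilization. Passing from $\Sigma$ to $S$ uses $\alpha_{+}(S)+\alpha_{-}(S)$ Hopf plumbings, so iterating the formula above gives
\[
\chi(S)=\chi(\Sigma)-\bigl(\alpha_{+}(S)+\alpha_{-}(S)\bigr).
\]
Similarly, passing from the standard disk $D$ to $S$ uses $\beta_{+}(S)+\beta_{-}(S)$ Hopf plumbings, and since $\chi(D)=1$,
\[
\chi(S)=\chi(D)-\bigl(\beta_{+}(S)+\beta_{-}(S)\bigr)=1-\bigl(\beta_{+}(S)+\beta_{-}(S)\bigr).
\]
Reading the two displays together yields the two claimed equalities $\chi(\Sigma)-\alpha_{+}(S)-\alpha_{-}(S)=\chi(S)$ and $1-\beta_{+}(S)-\beta_{-}(S)=\chi(S)$, which is exactly the statement.

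I do not expect any genuine obstacle here; the only point requiring a little care is the bookkeeping in the plumbing formula, namely that plumbing glues along a $2$-cell (so we subtract $\chi(\delta)=1$) rather than along an arc or a full annulus. As a sanity check and an alternative route, one may instead argue entirely in terms of the first Betti number: a connected fiber surface satisfies $\chi=1-b_{1}$, so substituting $\chi(\Sigma)=1-b_{1}(\Sigma)$ and $\chi(S)=1-b_{1}(S)$ into the already-noted identity $b_{1}(S)-b_{1}(\Sigma)=\alpha_{+}(S)+\alpha_{-}(S)$ reproduces the same relations.
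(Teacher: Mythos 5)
Your proof is correct and is exactly the argument the paper intends: the paper gives no written proof beyond the remark ``By computing the Euler numbers, we obtain the following,'' and your inclusion--exclusion computation $\chi(T\sharp_{p}H_{\pm})=\chi(T)-1$ applied to the two plumbing sequences (with $\chi(D)=1$) is that Euler-characteristic computation spelled out. Your Betti-number sanity check also matches the identity $b_{1}(S)-b_{1}(\Sigma)=\alpha_{+}(S)+\alpha_{-}(S)$ the paper records just before the lemma.
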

\begin{lem}\label{lem:2}
We have 
\[
0\leq \beta_{-}(S)=\alpha_{-}(S)-H(\xi_{\Sigma})=-H(\xi_{S}). 
\]
\end{lem}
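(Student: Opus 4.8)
The plan is to reduce everything to the additivity of the Hopf invariant under plumbing, equation~(\ref{p3}), together with its values on the two Hopf bands recorded in (\ref{p1})--(\ref{p3}). The content of the lemma is two parallel computations, one tracking the passage from the disk $D$ to $S$ and one tracking the passage from $\Sigma$ to $S$, glued together by the fact that both land on the same surface $S$.

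First I would pin down the base value $H(\xi_D)=0$ for the standard disk $D$. Its supported contact structure $\xi_D$ is the standard tight contact structure on $\mathbf{S}^3$, whose Gompf invariant satisfies $d_3(\xi_D)=-\tfrac{1}{2}$, so the first listed property gives $H(\xi_D)=-d_3(\xi_D)-\tfrac{1}{2}=0$. Next, since $S$ is obtained from $D$ by $\beta_+(S)$ positive and $\beta_-(S)$ negative Hopf plumbings, I would apply (\ref{p3}) repeatedly (inducting on the number of plumbings) to obtain
\[
H(\xi_S)=H(\xi_D)+\beta_+(S)\,H(\xi_{H_+})+\beta_-(S)\,H(\xi_{H_-})=-\beta_-(S),
\]
using $H(\xi_{H_+})=0$ and $H(\xi_{H_-})=-1$. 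This already yields $\beta_-(S)=-H(\xi_S)$. Expressing $S$ instead as the result of $\alpha_+(S)$ positive and $\alpha_-(S)$ negative Hopf plumbings on $\Sigma$ and running the same argument gives
\[
H(\xi_S)=H(\xi_\Sigma)+\alpha_+(S)\,H(\xi_{H_+})+\alpha_-(S)\,H(\xi_{H_-})=H(\xi_\Sigma)-\alpha_-(S),
\]
hence $-H(\xi_S)=\alpha_-(S)-H(\xi_\Sigma)$. Combining the two displays produces the central chain of equalities $\beta_-(S)=\alpha_-(S)-H(\xi_\Sigma)=-H(\xi_S)$, and the remaining inequality $0\le\beta_-(S)$ is immediate because $\beta_-(S)$ counts a number of negative Hopf plumbings.

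I do not expect a serious obstacle here: once (\ref{p1})--(\ref{p3}) are available, the argument is essentially bookkeeping by induction on the number of Hopf bands. The one point deserving genuine care is the base case $H(\xi_D)=0$, where I would make sure to invoke the correct normalization of the Gompf invariant $d_3$ on the tight $\mathbf{S}^3$; every subsequent step is forced by additivity.
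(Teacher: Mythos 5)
Your proof is correct and is essentially the paper's own argument: the paper likewise applies the additivity \eqref{p3} with the values \eqref{p1} and $H(\xi_{H_-})=-1$ to the two plumbing sequences $D\rightsquigarrow S$ and $\Sigma\rightsquigarrow S$, obtaining the single chain $H(\xi_\Sigma)-\alpha_-(S)=H(\xi_S)=H(\xi_D)-\beta_-(S)=-\beta_-(S)\leq 0$. Your only addition is spelling out the base value $H(\xi_D)=0$ via $d_3(\xi_D)=-\tfrac12$, which the paper uses implicitly; that is a harmless (indeed careful) elaboration, not a different route.
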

\begin{proof}
By $(\ref{p1})$--$(\ref{p3})$, we have 
\[
H(\xi_{\Sigma})-\alpha_{-}(S)=H(\xi_{S})=H(\xi_{D})-\beta_{-}(S)=-\beta_{-}(S)\leq 0. 
\]
\end{proof}
By considering the mirror image and Lemmas~\ref{lem:1} and \ref{lem:2}, we obtain Lemma~\ref{lem:3} below (see also \cite[Example~4.4]{Rudolph5} and \cite[Corollary~2.3]{Hedden3}). 
\begin{lem}\label{lem:3}
Let $\overline{\Sigma}$ be the mirror image of $\Sigma$. Then the mirror image $\overline{S}$ of $S$ is a common stabilization of $\overline{\Sigma}$ and $D$. 
Moreover, we obtain 
\begin{align*}
\alpha_{+}(S)&=\alpha_{-}(\overline{S}), \\
\beta_{+}(S)&=\beta_{-}(\overline{S}). 
\end{align*}
In particular, we have 
\[
1-\chi (\Sigma)+H(\xi_{\Sigma})+H(\xi_{\overline{\Sigma}})=0. 
\]
\end{lem}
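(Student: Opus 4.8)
The plan is to reduce the three assertions to the behavior of Hopf plumbings and of the standard disk under taking mirror images, and then to feed the resulting bookkeeping into Lemmas~\ref{lem:1} and~\ref{lem:2}. First I would record the two elementary geometric facts. Taking the mirror image of $\mathbf{S}^3$ is an orientation-reversing diffeomorphism; it carries the positive Hopf band $H_{+}$ to the negative Hopf band $H_{-}$ and conversely, and it fixes the standard disk $D$ up to isotopy, since $D$ is unknotted and planar. Moreover, mirroring is compatible with plumbing, in the sense that $\overline{\Sigma_{1}\sharp_{p}\Sigma_{2}}$ is a plumbing of $\overline{\Sigma_{1}}$ and $\overline{\Sigma_{2}}$. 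Granting these, if $S$ is obtained from $\Sigma$ by a sequence of $\alpha_{+}(S)$ positive and $\alpha_{-}(S)$ negative Hopf plumbings, then mirroring the entire sequence exhibits $\overline{S}$ as obtained from $\overline{\Sigma}$ by $\alpha_{-}(S)$ positive and $\alpha_{+}(S)$ negative Hopf plumbings; applying the same reasoning to the sequence relating $D$ and $S$ exhibits $\overline{S}$ as obtained from $\overline{D}=D$ by $\beta_{-}(S)$ positive and $\beta_{+}(S)$ negative Hopf plumbings. This simultaneously shows that $\overline{S}$ is a common stabilization of $\overline{\Sigma}$ and $D$ and yields the count relations $\alpha_{+}(S)=\alpha_{-}(\overline{S})$ and $\beta_{+}(S)=\beta_{-}(\overline{S})$ (along with their mirror counterparts).

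For the final identity I would combine Lemmas~\ref{lem:1} and~\ref{lem:2} applied to both common stabilizations. From Lemma~\ref{lem:2} for $S$ one has $\beta_{-}(S)=\alpha_{-}(S)-H(\xi_{\Sigma})$, while Lemma~\ref{lem:2} for $\overline{S}$ gives $\beta_{-}(\overline{S})=\alpha_{-}(\overline{S})-H(\xi_{\overline{\Sigma}})$; rewriting the latter through the count relations turns it into $\beta_{+}(S)=\alpha_{+}(S)-H(\xi_{\overline{\Sigma}})$. Substituting both expressions for $\beta_{\pm}(S)$ into the Euler-characteristic balance $\chi(\Sigma)-\alpha_{+}(S)-\alpha_{-}(S)=1-\beta_{+}(S)-\beta_{-}(S)$ of Lemma~\ref{lem:1}, the terms $\alpha_{+}(S)+\alpha_{-}(S)$ cancel from both sides and leave $\chi(\Sigma)=1+H(\xi_{\Sigma})+H(\xi_{\overline{\Sigma}})$, which is exactly the claimed relation $1-\chi(\Sigma)+H(\xi_{\Sigma})+H(\xi_{\overline{\Sigma}})=0$.

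The linear algebra of the last step is harmless; the point that genuinely needs care is the first paragraph, namely that mirroring respects the plumbing operation, reverses the sign of a Hopf band, and sends $D$ to $D$. Once these are pinned down the counts transfer verbatim. I would also note, for cleanliness, that the quantities $\alpha_{\pm}$ and $\beta_{\pm}$ are well defined independently of the chosen plumbing sequence: this is precisely the content of Lemmas~\ref{lem:1} and~\ref{lem:2}, which express them through $\chi$ and $H$, so mirroring one convenient sequence suffices to compute them.
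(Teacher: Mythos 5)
Your proposal is correct and follows essentially the same route as the paper: the paper declares the mirror-image claims and the count relations $\alpha_{+}(S)=\alpha_{-}(\overline{S})$, $\beta_{+}(S)=\beta_{-}(\overline{S})$ obvious (you merely spell out the underlying facts that mirroring swaps $H_{+}$ and $H_{-}$, fixes $D$, and commutes with plumbing), and then derives the identity $1-\chi(\Sigma)+H(\xi_{\Sigma})+H(\xi_{\overline{\Sigma}})=0$ exactly as you do, by applying Lemma~\ref{lem:2} to both $S$ and $\overline{S}$ and substituting into the Euler-characteristic relation of Lemma~\ref{lem:1}. Your closing remark on the well-definedness of $\alpha_{\pm}$ and $\beta_{\pm}$ is a sound observation but not needed beyond what Lemmas~\ref{lem:1} and~\ref{lem:2} already provide.
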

\begin{proof}
The first and second claims are obvious. 
Let us prove the last claim. 
By Lemma~\ref{lem:2}, we have 
\begin{align*}
H(\xi_{\Sigma})&=\alpha_{-}(S)-\beta_{-}(S),\\
H(\xi_{\overline{\Sigma}})&=\alpha_{-}(\overline{S})-\beta_{-}(\overline{S})=\alpha_{+}(S)-\beta_{+}(S). 
\end{align*}
By Lemma~\ref{lem:1}, we finish the proof. 
\end{proof}
\begin{lem}\label{lem:4}
For any fiber surface $\Sigma\subset \mathbf{S}^{3}$, we obtain 
\[
\max\{H(\xi_{\Sigma}), 0\}+\max\{H(\xi_{\overline{\Sigma}}), 0\}\leq h(\Sigma). 
\]
\end{lem}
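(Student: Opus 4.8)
The plan is to establish the bound for an arbitrary common stabilization $S$ of $\Sigma$ and $D$ and then pass to the minimum defining $h(\Sigma)$. First I would recall from the remark preceding Lemma~\ref{lem:1} that
\[
b_{1}(S)-b_{1}(\Sigma)=\alpha_{+}(S)+\alpha_{-}(S),
\]
so it suffices to bound each of the two summands from below by the corresponding $\max\{H,0\}$ term and add the results.

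For the summand $\alpha_{-}(S)$, I would invoke Lemma~\ref{lem:2}. Since $\beta_{-}(S)\geq 0$, the equality $\beta_{-}(S)=\alpha_{-}(S)-H(\xi_{\Sigma})$ gives $\alpha_{-}(S)\geq H(\xi_{\Sigma})$. Combining this with the obvious inequality $\alpha_{-}(S)\geq 0$, which holds because $\alpha_{-}(S)$ counts negative Hopf plumbings, yields $\alpha_{-}(S)\geq \max\{H(\xi_{\Sigma}),0\}$.

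For the summand $\alpha_{+}(S)$ I would pass to the mirror image. By Lemma~\ref{lem:3}, $\overline{S}$ is a common stabilization of $\overline{\Sigma}$ and $D$, and $\alpha_{+}(S)=\alpha_{-}(\overline{S})$. Applying the same reasoning as above, now via Lemma~\ref{lem:2} for the pair $(\overline{\Sigma},\overline{S})$, gives $\alpha_{-}(\overline{S})\geq H(\xi_{\overline{\Sigma}})$ together with $\alpha_{-}(\overline{S})\geq 0$, so that $\alpha_{+}(S)\geq\max\{H(\xi_{\overline{\Sigma}}),0\}$. Adding the two bounds, I obtain
\[
b_{1}(S)-b_{1}(\Sigma)\geq \max\{H(\xi_{\Sigma}),0\}+\max\{H(\xi_{\overline{\Sigma}}),0\}
\]
for every common stabilization $S$, and taking the minimum over all such $S$ produces the claimed inequality.

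This argument is essentially a bookkeeping consequence of Lemmas~\ref{lem:2} and~\ref{lem:3}, so I do not anticipate a genuine obstacle. The only point requiring a little care is the appearance of $\max\{\,\cdot\,,0\}$: it arises precisely because each count $\alpha_{-}$ is simultaneously a nonnegative integer and bounded below by a Hopf invariant, so one must combine the two inequalities rather than relying on Lemma~\ref{lem:2} alone. The mirror-image step, while the conceptually decisive ingredient, is already packaged in Lemma~\ref{lem:3} and therefore reduces to a direct application.
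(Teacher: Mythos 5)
Your proof is correct and follows essentially the same route as the paper: both rest on Lemma~\ref{lem:2} (giving $\alpha_{-}(S)\geq H(\xi_{\Sigma})$) together with the mirror identity $\alpha_{+}(S)=\alpha_{-}(\overline{S})$ from Lemma~\ref{lem:3}, applied to the decomposition $b_{1}(S)-b_{1}(\Sigma)=\alpha_{+}(S)+\alpha_{-}(S)$. The only (harmless) difference is organizational: you fold the $\max\{\cdot,0\}$ into each summand before summing and minimizing over $S$, whereas the paper records four separate inequalities against $h(\Sigma)$ and combines them at the end.
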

\begin{proof}
By Lemma~\ref{lem:2}, we obtain 
\begin{align*}
H(\xi_{\Sigma})&\leq \alpha_{-}(S)\leq \alpha_{-}(S)+\alpha_{+}(S)=b_{1}(S)-b_{1}(\Sigma), \\
H(\xi_{\overline{\Sigma}})&\leq \alpha_{-}(\overline{S})=\alpha_{+}(S)\leq \alpha_{-}(S)+\alpha_{+}(S)=b_{1}(S)-b_{1}(\Sigma), \\
H(\xi_{\Sigma})+H(\xi_{\overline{\Sigma}})&\leq \alpha_{-}(S)+\alpha_{+}(S)=b_{1}(S)-b_{1}(\Sigma). 
\end{align*}
Since the above three inequalities hold for any common stabilization $S$, we have 
\begin{align*}
0&\leq h(\Sigma), \\
H(\xi_{\Sigma})&\leq h(\Sigma), \\
H(\xi_{\overline{\Sigma}})&\leq h(\Sigma), \\
H(\xi_{\Sigma})+H(\xi_{\overline{\Sigma}})&\leq h(\Sigma). 
\end{align*}
These imply 
$
\max\{H(\xi_{\Sigma}), 0\}+\max\{H(\xi_{\overline{\Sigma}}), 0\}\leq h(\Sigma). 
$
\end{proof}
%
\begin{rem}
Typical operations which preserve the fiberness of a surface (or a link) are taking mirror images, Murasugi sums, Stallings twists and cablings. 
The behaviors of the Hopf invariant under taking mirror images and Murasugi sums have been introduced in this section. 
\par 
The behavior of the Hopf invariant under cablings have been described by Hedden \cite[Theorem~2.6]{Hedden3} and Neumann and Rudolph \cite[Corollary~5.3]{Neumann-Rudolph}. 
\par 
On Stallings twists, there are Neumann and Rudolph's work \cite[Theorem~7.1]{Neumann-Rudolph}. 
\end{rem}

%
%
\section{Proof of Theorem~\ref{thm:main}}\label{sec:proof}
In this section, we prove Lemma~\ref{lem:main2}. 
The following lemma has been known (for example, see \cite[p.~84]{Neumann-Rudolph} and \cite[Remark~5.6]{Etnyre-Ozbagci}). 
\begin{lem}\label{lem:main}
Let $\Sigma_{n}\subset \mathbf{S}^3$ be a family of fiber surfaces of the same topological type. 
If $|H(\xi_{\Sigma_n})|\rightarrow \infty$ as $|n|\rightarrow \infty$, then we obtain 
$\displaystyle{\lim_{|n|\rightarrow \infty}}h(\Sigma_n)=\infty$. 
\end{lem}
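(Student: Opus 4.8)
The plan is to combine the lower bound for the stabilization height in Lemma~\ref{lem:4} with the constancy of the sum $H(\xi_{\Sigma_n})+H(\xi_{\overline{\Sigma_n}})$ supplied by Lemma~\ref{lem:3}. First I would record that all the $\Sigma_n$ share the same topological type $\Sigma$, so $\chi(\Sigma_n)=\chi(\Sigma)$ does not depend on $n$. Feeding this into the last identity of Lemma~\ref{lem:3}, namely $1-\chi(\Sigma_n)+H(\xi_{\Sigma_n})+H(\xi_{\overline{\Sigma_n}})=0$, shows that
\[
H(\xi_{\Sigma_n})+H(\xi_{\overline{\Sigma_n}})=\chi(\Sigma)-1=:k
\]
is a constant independent of $n$. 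This is the structural fact that makes the two-sided hypothesis $|H(\xi_{\Sigma_n})|\to\infty$ tractable.

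Next I would invoke Lemma~\ref{lem:4}, which gives
\[
h(\Sigma_n)\geq \max\{H(\xi_{\Sigma_n}),0\}+\max\{H(\xi_{\overline{\Sigma_n}}),0\}
\]
for every $n$. Writing $a=H(\xi_{\Sigma_n})$ and $b=H(\xi_{\overline{\Sigma_n}})$, it then suffices to show that $\max\{a,0\}+\max\{b,0\}\to\infty$, knowing that $a+b=k$ is fixed while $|a|\to\infty$. Because $a+b=k$ is bounded, the divergence $|a|\to\infty$ forces $|b|=|k-a|\to\infty$ as well; intuitively, if $a$ blows up towards $+\infty$ the first term already diverges, whereas if $a$ blows up towards $-\infty$ then $b$ blows up towards $+\infty$ and the second (mirror) term diverges.

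To make this uniform I would use the elementary identity $\max\{a,b\}=\tfrac{1}{2}\bigl((a+b)+|a-b|\bigr)$. Since $a-b=2a-k$, we get $|a-b|\geq 2|a|-|k|\to\infty$, hence $\max\{a,b\}\to\infty$ and in particular $\max\{a,b\}$ is eventually positive, so that
\[
\max\{a,0\}+\max\{b,0\}\geq \max\{a,b,0\}=\max\{a,b\}\longrightarrow\infty.
\]
Combining this with the lower bound from Lemma~\ref{lem:4} yields $h(\Sigma_n)\to\infty$ as $|n|\to\infty$, completing the argument.

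I do not expect a serious obstacle: once Lemmas~\ref{lem:3} and~\ref{lem:4} are in place the statement is essentially a bookkeeping exercise. The only point requiring care is that the Hopf invariant may diverge to $-\infty$ rather than $+\infty$, in which case the bound coming from $\Sigma_n$ itself is vacuous; the role of Lemma~\ref{lem:3} is precisely to transfer this divergence to the mirror surface $\overline{\Sigma_n}$, whose contribution is captured by the second summand of Lemma~\ref{lem:4}.
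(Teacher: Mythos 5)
Your proposal is correct and follows exactly the paper's argument: Lemma~\ref{lem:3} (with $\chi(\Sigma_n)$ constant across the family) forces $\max\{H(\xi_{\Sigma_n}), H(\xi_{\overline{\Sigma_n}})\}\rightarrow\infty$, and then the lower bound of Lemma~\ref{lem:4} gives $h(\Sigma_n)\rightarrow\infty$. The only difference is that you spell out the elementary bookkeeping (the identity $\max\{a,b\}=\tfrac{1}{2}\bigl((a+b)+|a-b|\bigr)$ and the passage from $\max\{a,b\}$ to $\max\{a,0\}+\max\{b,0\}$) which the paper leaves implicit.
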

\begin{proof}
The assumption and Lemma~\ref{lem:3} imply $\max\{H(\xi_{\Sigma_n}), H(\xi_{\overline{\Sigma_n}})\}\rightarrow \infty$ as $|n|\rightarrow \infty$ since $\chi(\Sigma_{n})$ is a constant. 
By Lemma~\ref{lem:4}, we finish the proof. 
\end{proof}
In order to prove Theorem~\ref{thm:main}, we construct a family of fiber surfaces satisfying Lemma~\ref{lem:main2} below. 
\begin{lem}\label{lem:main2}
Let $\Sigma_{n}\subset \mathbf{S}^3$ be a family of fiber surfaces of the same topological type $\Sigma$. 
Assume that the monodromies $\psi_n\colon \Sigma \rightarrow \Sigma$ of $\Sigma_{n}$ satisfy $\psi_n=t^{-n}_{c_{1}}\circ t^{n}_{c_{2}}\circ \psi_{0}$, where $c_1, c_2$ are essential, non-separating simple closed curves on $\Sigma$ and $t_{c_i}$ is the right-handed Dehn twist along $c_i$. 
Then, if $|H(\xi_{\Sigma_n})|\rightarrow \infty$ as $|n|\rightarrow \infty$, we obtain 
\begin{itemize}
\item $\displaystyle{\lim_{|n|\rightarrow \infty}}h(\Sigma_n)=\infty$, 
\item $\operatorname{scl}(\psi_n)\leq\operatorname{cl}(\psi_0)+1$, where we consider the stable commutator length $\operatorname{scl}$ and the commutator length $\operatorname{cl}$ in the mapping class group of a surface containing $\Sigma$ as a subsurface. 
\end{itemize}
\end{lem}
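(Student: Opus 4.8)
The first assertion requires no new work. The surfaces $\Sigma_n$ all share the topological type $\Sigma$, so the hypothesis $|H(\xi_{\Sigma_n})|\to\infty$ places us exactly in the situation of Lemma~\ref{lem:main}, from which $\lim_{|n|\to\infty}h(\Sigma_n)=\infty$ follows directly. The content of the lemma therefore lies in the uniform bound on $\operatorname{scl}(\psi_n)$, and this is where I would concentrate.

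The plan for the second assertion is to exhibit the twisting factor $t_{c_1}^{-n}t_{c_2}^{n}$ as a \emph{single} commutator, uniformly in $n$. First I would fix once and for all an ambient surface $F$ containing $\Sigma$ as a subsurface --- for definiteness a closed surface obtained by capping off the boundary components of $\Sigma$ and, if necessary, adding handles so that its genus is at least three --- and work throughout in the mapping class group $G$ of $F$, regarding $t_{c_1},t_{c_2},\psi_0,\psi_n$ as elements of $G$. Capping off boundary disks and adding handles away from a given curve sends a non-separating curve to a non-separating curve, so $c_1$ and $c_2$ remain non-separating in $F$. By the change of coordinates principle there is then an orientation-preserving mapping class $\phi\in G$ with $\phi(c_1)=c_2$; since $\phi$ preserves orientation and conjugation carries a Dehn twist to the twist about the image curve, $\phi\,t_{c_1}\,\phi^{-1}=t_{c_2}$, with the handedness unchanged.

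Granting this, the computation is immediate: $[t_{c_1}^{-n},\phi]=t_{c_1}^{-n}\,\phi\,t_{c_1}^{n}\,\phi^{-1}=t_{c_1}^{-n}\,(\phi\,t_{c_1}\,\phi^{-1})^{n}=t_{c_1}^{-n}t_{c_2}^{n}$, so that $\psi_n=[t_{c_1}^{-n},\phi]\,\psi_0$. Writing $\psi_0$ as a product of $\operatorname{cl}(\psi_0)$ commutators and prepending the single commutator $[t_{c_1}^{-n},\phi]$ exhibits $\psi_n$ as a product of $\operatorname{cl}(\psi_0)+1$ commutators, whence $\operatorname{cl}(\psi_n)\le\operatorname{cl}(\psi_0)+1$ for every $n$. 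The desired bound then follows from the elementary inequality $\operatorname{scl}(g)\le\operatorname{cl}(g)$, itself a consequence of $\operatorname{cl}(g^{k})\le k\operatorname{cl}(g)$ together with the definition $\operatorname{scl}(g)=\lim_{k\to\infty}\operatorname{cl}(g^{k})/k$; this gives $\operatorname{scl}(\psi_n)\le\operatorname{cl}(\psi_n)\le\operatorname{cl}(\psi_0)+1$ uniformly in $n$.

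I expect the only real subtlety to be the existence of the conjugating element $\phi$ together with the bookkeeping that makes $\operatorname{cl}(\psi_0)$ meaningful. Concretely, I must choose $F$ large enough that both curves are non-separating there and that its mapping class group is perfect, so that $\psi_0$, and hence $\psi_n$, lies in the commutator subgroup and $\operatorname{cl}$ is finite; and I must ensure $\phi$ is orientation-preserving so that $\phi\,t_{c_1}\,\phi^{-1}$ is the \emph{right-handed} twist $t_{c_2}$ rather than its inverse. Both points are handled by the change of coordinates principle on a closed orientable surface of sufficiently large genus, so no genuinely hard step remains once $F$ is fixed appropriately.
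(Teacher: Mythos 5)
Your proposal is correct and follows essentially the same route as the paper: both deduce the first claim from Lemma~\ref{lem:main}, and both prove the second by using the change of coordinates principle to produce a mapping class $\phi$ (the paper's $g$) with $\phi(c_1)=c_2$, so that $t_{c_1}^{-n}\circ t_{c_2}^{n}=[t_{c_1}^{-n},\phi]$ is a single commutator uniformly in $n$ and hence $\operatorname{scl}(\psi_n)\leq\operatorname{cl}(\psi_n)\leq\operatorname{cl}(\psi_0)+1$. The only minor difference is that the paper takes $g$ to be a diffeomorphism of $\Sigma$ itself fixing $\partial\Sigma$, which extends by the identity to every surface containing $\Sigma$ as a subsurface, so the bound holds in all such mapping class groups simultaneously, whereas you fix one ambient $F$ of large genus --- your argument transfers verbatim to any ambient surface (a curve non-separating in $\Sigma$ remains non-separating there), and your extra perfectness requirement is unnecessary since the inequality is vacuous when $\operatorname{cl}(\psi_0)=\infty$.
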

\begin{rem}
It is known that the mapping class group of an orientable surface of genus greater than $2$ is perfect, namely any element in the group is expressed as a product of commutators (see \cite{Harer3, Powell}). 
In Lemma~\ref{lem:main2}, if $g(\Sigma_{n})\leq 2$, and $\psi_n$ and $\psi_0$ cannot be expressed as a product of commutators, we define the commutator lengths by $\infty$. 
\end{rem}
\begin{proof}[Proof of Lemma~\ref{lem:main2}]
The first claim follows from Lemma~\ref{lem:main}. 
Let us prove the second claim. 
Let $c_{1}$ and $c_{2}$ be the curves given in the assumption. 
Since these curves are non-separating, there is a diffeomorphism $g\colon \Sigma \rightarrow \Sigma$ fixing the boundary such that $g(c_{1})=c_{2}$ (for example, see \cite[Proposition~12.7]{knot-gtm}). 
Hence, 
\begin{align*}
\operatorname{scl}(\psi_n)=\operatorname{scl}(t^{-n}_{c_{1}}\circ t^{n}_{c_{2}}\circ \psi_{0}) &\leq \operatorname{cl}(t^{-n}_{c_{1}}\circ t^{n}_{g(c_{1})}\circ \psi_{0})\\
&=\operatorname{cl}(t^{-n}_{c_{1}}\circ g\circ t^{n}_{c_{1}}\circ g^{-1}\circ \psi_{0})\\
&=\operatorname{cl}([t^{-n}_{c_{1}}, g]\circ \psi_{0})\\
&\leq\operatorname{cl}(\psi_{0})+1, 
\end{align*}
where $[t^{-n}_{c_{1}}, g]$ is the commutator of $t^{-n}_{c_{1}}$ and $g$. 
\end{proof}
%

\begin{proof}[Proof of Theorem~\ref{thm:main}]
Let $F_{0}\subset \mathbf{S}^3$ be the surface depicted in Figure~\ref{fig:monodromy2}. 
The surface $F_{0}$ is a fiber surface since it is a minimal Seifert surface of the fibered knot $\partial F_{0}=6_3$. 
More directly, we can check that $F_{0}$ is obtained from a disk by plumbing $2$ positive Hopf bands and $2$ negative Hopf bands (see the right in Figure~\ref{fig:monodromy2}). 
The monodromy $f_{0}$ of $F_{0}$ is given by 
\[
f_{0}=t^{-1}_d \circ t_b \circ t^{-1}_c\circ t_a, 
\]
where $a,b,c$ and $d$ are the curves depicted in the left pictures in Figure~\ref{fig:monodromy}, and $t_a$, $t_b$, $t_c$ and $t_d$ are the right-handed Dehn twists along the corresponding curves. 
\par 
Let $c'_{1}$ and $c'_{2}$ be the disjoint non-separating simple closed curves on $F_{0}$ depicted in Figure~\ref{fig:monodromy}. 
By twisting along $c'_1$ of order $n$ and along $c'_2$ of order $-n$, we obtain a new fiber surface in $M_{c'_1\cup c'_2}(1/n, -1/n)\cong \mathbf{S}^3$, where $M_{c'_1\cup c'_2}(1/n, -1/n)$ is the $3$-manifold obtained by $(1/n, -1/n)$-surgery along $c'_1\cup c'_2$. 
Denote the fiber surface by $F_{n}$. 
Then the monodromy $f_{n}$ of $F_{n}$ is given by 
\[
f_{n}=t^{-n}_{c'_{1}}\circ t^{n}_{c'_{2}}\circ f_0. 
\]
In \cite[Section~6]{Abe-Tagami3}, we compute that $d_{3}(\xi_{F_{n}})=-n^{2}-n+3/2$. 
Hence, $H(\xi_{F_{n}})=n^{2}+n-2$. 
This implies that $H(\xi_{F_{n}})\rightarrow \infty$ as $|n|\rightarrow \infty$. 
\par 
Let $F$ be a fixed fiber surface of genus greater than $0$. Denote the monodromy of $F$ by $f$. 
Let $\Sigma_{n}$ be a boundary sum of $F_n$ and $F$. 
Note that $\Sigma_n$ is a fiber surface of genus greater than $2$ and has the topological type $\Sigma_{0}$. 
Denote the corresponding monodromy of $\Sigma_n$ by $\psi_n=t^{-n}_{c'_{1}}\circ t^{n}_{c'_{2}}\circ f_{0}\circ f$. 
By the above calculation, we have $H(\xi_{\Sigma_{n}})=H(\xi_{F_{n}})+H(\xi_{F})\rightarrow \infty$ as $|n|\rightarrow \infty$. 
Hence, the family $\{\Sigma_{n}\}_{n\in \mathbf{Z}}$ satisfies the condition of Lemma~\ref{lem:main2}. 
Then, we obtain 
\[
\operatorname{scl}(\psi_{n})\leq \operatorname{cl}(\psi_0)+1, 
\]
where we consider the commutator lengths in the mapping class group of a surface $S$ containing $\Sigma_0$ as a subsurface. 
We see that $\operatorname{cl}(\psi_0)$ is maximal if $S=\Sigma_0$, and we have the second condition of Theorem~\ref{thm:main}. 
\begin{figure}[h]
\centering
\includegraphics[scale=0.8]{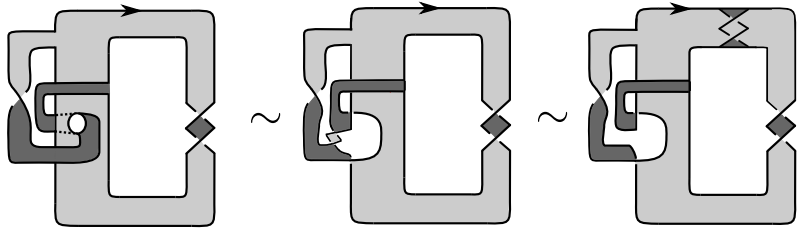}
\caption{The fiber surface $F_{0}$ of $6_3$}
\label{fig:monodromy2}
\end{figure}
\begin{figure}[h]
\centering
\includegraphics[scale=0.9]{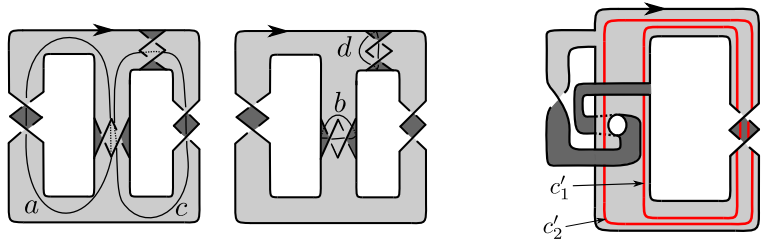}
\caption{(color online) The curves $a,b,c,d$, and $c'_1, c'_2$ on $F_{0}$}
\label{fig:monodromy}
\end{figure}
\end{proof}
The fibered knots $\partial F_{n}$ in Proof of Theorem~\ref{thm:main} are constructed from a ``compatible annulus presentation" of $6_3$ by annulus twists (for definition, see \cite[Section~5]{Abe-Tagami2}). 
By the same way, it seems that we can construct many families of fiber surfaces which satisfy the assumption of Lemma~\ref{lem:main2}. 
%
%
\section{Discussion}\label{sec:discussion}
In Baader and Misev's proof of Theorem~\ref{thm:baader-misev}, they used the unboundedness of stable commutator lengths of monodromies. 
Because $\operatorname{scl}(\psi_{n})$ of Lemma~\ref{lem:main2} is bounded, Baader and Misev's technique 
cannot be used to prove Theorem~\ref{thm:main}. 
\par
On the other hand, in Proof of Theorem~\ref{thm:main} (and Lemma~\ref{lem:main2}), we use the unboundedness of the Hopf invariants. 
In general, the Hopf invariants are not preserved under Stallings twists. 
However, Stallings twists of type $(0,1)$ preserve the Hopf invariant (see Section~\ref{sec:stallings} and Remark~\ref{rem:cl}). 
In particular, we cannot apply Lemma~\ref{lem:main2} to Baader and Misev's example \cite[Section~2.2]{Baader-Misev}, which is given by a Stallings twist of type $(0,1)$. 
%
%
\par
\ 
\par
\noindent{\bf Acknowledgements: }
We would like to thank Susumu Hirose and Naoyuki Monden for helpful comments on Remark~\ref{rem:cl}. 
The author was supported by JSPS KAKENHI Grant numbers JP16H07230 and JP18K13416. 
\bibliographystyle{amsplain}
\bibliography{mrabbrev,tagami}
\end{document}